\numberwithin{equation}{section}
\newtheorem{thm}{Theorem}[section]
\newtheorem{lem}[thm]{Lemma}
\newtheorem{fact}[thm]{Fact}
\newtheorem{alg}[thm]{Algorithm}
\newtheorem{prob}[thm]{Problem}
\newtheorem{remark}[thm]{Remark}
\newcommand{\ra}[1]{\renewcommand{\arraystretch}{#1}}
\def\ps@pprintTitle{%
 \let\@oddhead\@empty
 \let\@evenhead\@empty
 \def\@oddfoot{\centerline{\thepage}}%
 \let\@evenfoot\@oddfoot}
\begin{document}

\begin{frontmatter}

\title{Efficient degree reduction of B\'ezier curves with box constraints using dual bases}

\author{Przemys{\l}aw Gospodarczyk\corref{cor}}
\ead{pgo@ii.uni.wroc.pl}
\author{Pawe{\l} Wo\'{z}ny}
\ead{Pawel.Wozny@ii.uni.wroc.pl}
\cortext[cor]{Corresponding author. Fax {+}48 71 3757801}
\address{Institute of Computer Science, University of Wroc{\l}aw,
         ul.~Joliot-Curie 15, 50-383 Wroc{\l}aw, Poland}

\begin{abstract}
In this paper, we give an efficient algorithm of degree reduction of B\'ezier curves with box constraints.
The idea is to combine the previous iterative approach, that has been presented recently in
(P. Gospodarczyk, Comput.~Aided Des.~62 (2015), 143--151), with a fast method of construction of dual bases from
(P. Wo\'zny, J.~Comput.~Appl.~Math.~260 (2014), 301--311) and a new efficient method of modification of dual bases.
\end{abstract}

\begin{keyword}
B\'{e}zier curves, degree reduction, box constraints, restricted area, constrained least squares approximation, dual bases.
\end{keyword}

\end{frontmatter}

\section{Introduction}\label{Sec:Intro}

Degree reduction of B\'ezier curves is to approximate an original B\'ezier curve of a certain degree with a different one of a lower degree.
 Such a procedure can be used in \textit{data exchange}, \textit{data compression} and \textit{data comparison}.
Therefore, degree reduction of B\'ezier curves is an important problem in CAGD.
A \textit{conventional} approach to this problem is to minimize a chosen error function subject to \textit{parametric} or \textit{geometric} continuity constraints at the endpoints. In the past $30$ years, the conventional approach has been extensively studied (see, e.g., \cite{Ahn03,ALPY04,HB16,B06,BWX95,Eck95,GLW15,GLW17,Lac91,Lu13,Lu15,LW06,LW08,LX16,LPR99,Sun05,SL04,WL09} and the lists of references given there). Some of the algorithms of conventional degree reduction are based on properties of the dual basis for the Bernstein basis, i.e., the so-called \textit{dual Bernstein polynomials}. Consequently, they have the lowest computational complexity among all existing methods, and they avoid matrix inversion. For details, see \cite{GLW15,GLW17,Lu15,WL09}. Unfortunately, the conventional degree reduction has its serious drawbacks. As a result of this common strategy, one may obtain control points that are located far away from the plot of the curve. Consequently, further editing of the resulting curve may be difficult. In addition, the \textit{convex hull property} of the curve is useless in some practical applications. For details,
see \cite[Section 2]{Gos15}.

In \cite{Gos15}, the goal was to eliminate the mentioned issues that arise in the case of the conventional approach.
To do so, one of us formulated and solved a new problem of degree reduction of B\'ezier curves with box constraints
(for an analogous problem of merging of B\'ezier curves with box constraints, see \cite{GW14}).
Now, we recall that problem. Let $\Pi_n^2$ denote the space of all parametric polynomials in $\mathbb{R}^2$ of degree at most $n$.
\begin{prob}\,[\textsf{Degree reduction of B\'ezier curves with box constraints}]\label{P:Problem}\\
Let there be given a \emph{B\'{e}zier curve} $P_n \in \Pi_n^2$,
\begin{equation*}
    P_n(t) := \sum_{i = 0}^{n} p_{i}B_i^{n}(t) \qquad (0 \leq t \leq 1),
\end{equation*}
where $p_i := \left(p_i^x,p_i^y\right) \in \mathbb{R}^2$ are called \emph{control points}, and $n$ is the \emph{degree} of the curve. Here
$$
B_i^n(t) := \binom{n}{i}t^i(1-t)^{n-i} \qquad (0 \leq i \leq n)
$$
are \emph{Bernstein polynomials of degree $n$}. Find a B\'{e}zier curve $R_m \in \Pi_m^2$ of a lower degree $m$,
\begin{equation*}
    R_m(t) := \sum_{i = 0}^{m} r_{i}B_i^{m}(t) \qquad (0 \leq t \leq 1;\ m<n),
\end{equation*}
satisfying the following conditions:
\begin{itemize}
    \item[(i)] value of the \emph{least squares error}
    \begin{equation}\label{EQ:min}
         E \equiv \|P_n-R_m\|_2^T := \sqrt{\sum_{k=0}^{N}\| P_n(t_k) - R_m(t_k)\|^2}
    \end{equation}
    is minimized, where $T := \{t_k\}^N_{k=0}$ $(N \in \mathbb{N})$ is a given strictly increasing sequence whose elements are in the interval $[0,\,1]$, and
    $\|\cdot\|$ is the Euclidean vector norm in $\mathbb{R}^2$;

    \item[(ii)] $P_n$ and $R_m$ are $C^{\alpha,\beta}$-\emph{continuous} ($\alpha, \beta \geq -1$ and $\alpha+\beta < m-1$) at the endpoints, i.e.,
        \begin{equation}\label{EQ:cont}
        \left.\begin{array}{ll}
        \displaystyle P_n^{(i)}(0) = R_m^{(i)}(0) &\qquad (i = 0, 1,\ldots, \alpha),\\[1ex]
        \displaystyle P_n^{(j)}(1) = R_m^{(j)}(1) &\qquad (j = 0, 1,\ldots, \beta);
        \end{array}\right\}
        \end{equation}

    \item[(iii)] \emph{inner} control points $r_i := \left(r_i^x,r_i^y\right)$ $(\alpha < i < m-\beta)$ are located inside the specified rectangular area, including edges of the rectangle, i.e., the following \emph{box constraints} are fulfilled:
     \begin{equation}\label{EQ:box}
         l_z \leq r_{i}^z \leq u_z \qquad (i = \alpha+1, \alpha+2,\ldots, m-\beta-1;\ z= x, y),
     \end{equation}
     where $l_x, l_y, u_x, u_y \in \mathbb R$.
\end{itemize}
\end{prob}

\begin{remark}\label{R:Trad}\normalfont
Further on in the paper, the minimization of~\eqref{EQ:min}, with the conditions~\eqref{EQ:cont}, but without the box constraints~\eqref{EQ:box} is called the \textit{traditional degree reduction}.
\end{remark}

 Observe that the key idea was to impose the box constraints~\eqref{EQ:box}. These additional restrictions guarantee that the resulting inner control points will not go outside the specified rectangular area. However, because of the new restrictions, Problem~\ref{P:Problem} is much more difficult to solve than the conventional problems of degree reduction, and the mentioned well-developed algorithms based on dual Bernstein polynomials cannot be applied.

In \cite[Section 5]{Gos15}, one can find step by step instructions on how to solve Problem~\ref{P:Problem} using \textit{BVLS algorithm (bounded-variable least-squares)} \cite{SP95}. This iterative \textit{active-set method} is modelled on \textit{NNLS algorithm (non-negative least-squares)} \cite{LH74}.
The main computational cost of a single iteration of the algorithm is associated with solving the so-called \textit{subproblem}
(see \cite[Section 5, Step 3]{Gos15}). Consequently, further research can focus on finding fast methods of solving the subproblem.

Given the high quality of methods based on dual bases that are used for the conventional degree reduction, the main goal of this paper is to introduce a new method of a similar quality for the subproblem. First, using an observation that the subproblems in consecutive iterations are \textit{quite similar}, we establish a connection between them (see Section~\ref{Sec:Idea}). This leads us to an idea of using two types of fast connections between certain dual functions. As it turns out, the connections of the first type were already given by one of us in \cite{Woz13,Woz14} (for the sake of completeness, we recall them in Section~\ref{Sec:Dual}), whereas the connections of the second type are new (see Section~\ref{Sec:Prop}). Thanks to dual bases,
our new method of solving Problem~\ref{P:Problem} avoids dealing with a system of normal equations in every iteration.
As a result, it is faster than the one from \cite{Gos15}, and it avoids matrix inversion which is considered to be risky from the numerical point of view (see, e.g., \cite[Section 14]{Hig02}). Examples showing the efficiency of the new approach are presented in Section~\ref{Sec:Ex}. Section~\ref{Sec:Conc} concludes the paper.

Further on in this section, we give a short introduction to dual bases since they are our main tool in solving Problem~\ref{P:Problem}.

\subsection{A short introduction to dual bases}\label{Sec:DIntro}

A \textit{dual basis} $D_n := \left\{d_0^{(n)},d_1^{(n)},\ldots,d_n^{(n)}\right\}$ for a basis $B_n := \left\{b_0,b_1,\ldots,b_n\right\}$ of the linear space $\mathcal{B}_n:= \mbox{span}\,B_n$ satisfies the following conditions:
\begin{equation*}
\left\{\begin{array}{ll}
\displaystyle \mbox{span}\,D_n=\mathcal{B}_n, &\\[1ex]
\displaystyle \left<b_i,d_j^{(n)}\right>=\delta_{ij} & (i,j = 0,1,\ldots,n),
\end{array}\right.
\end{equation*}
where $\delta_{ij}$ equals $1$ if $i = j$, and $0$ otherwise; $\left<\cdot,\cdot\right>\,:\,\mathcal{B}_n\times \mathcal{B}_n\rightarrow\mathbb{R}$ is an inner product; and $d_j^{(n)}$ $(j=0,1,\ldots,n)$ are called \textit{dual functions}.

Now, we present some well-known facts about dual bases (see, e.g., \cite[Section 1]{Woz13}).
\begin{fact}\label{T:DualRepr}
Every $f_n \in \mathcal{B}_n$ can be written in the following way:
$$
f_n = \sum_{i=0}^n \left<f_n,d_i^{(n)}\right>b_i.
$$
\end{fact}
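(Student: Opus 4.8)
The plan is to use directly the two defining properties of the dual basis listed just above: that $B_n$ is a basis of $\mathcal{B}_n$ (so every element admits a representation in that basis), and the biorthogonality relation $\left<b_i,d_j^{(n)}\right>=\delta_{ij}$ (which will let me read off the coefficients of that representation).

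First, since $B_n=\{b_0,b_1,\ldots,b_n\}$ is a basis of $\mathcal{B}_n$, I would write an arbitrary $f_n\in\mathcal{B}_n$ as a linear combination
$$
f_n=\sum_{i=0}^n c_i b_i,
$$
with uniquely determined scalars $c_i$. The claim then reduces to identifying these coefficients, i.e.\ to showing that $c_i=\left<f_n,d_i^{(n)}\right>$ for every $i$. To do so, I would fix an index $j$, take the inner product of both sides with $d_j^{(n)}$, and use linearity of $\left<\cdot,\cdot\right>$ in the first argument:
$$
\left<f_n,d_j^{(n)}\right>=\left<\sum_{i=0}^n c_i b_i,\,d_j^{(n)}\right>=\sum_{i=0}^n c_i\left<b_i,d_j^{(n)}\right>.
$$
Applying the biorthogonality $\left<b_i,d_j^{(n)}\right>=\delta_{ij}$ collapses the sum to the single surviving term $c_j$, so that $c_j=\left<f_n,d_j^{(n)}\right>$. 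Substituting this back into the expansion of $f_n$ yields the stated formula.

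I do not expect any genuine obstacle here: the result is essentially a restatement of biorthogonality combined with the fact that $B_n$ spans $\mathcal{B}_n$. The only point worth a remark is that the representation $f_n=\sum_i c_i b_i$ is legitimate \emph{before} the coefficients are computed, precisely because $B_n$ is a basis, so the $c_i$ exist and are unique; the dual functions then merely supply an explicit formula $c_i=\left<f_n,d_i^{(n)}\right>$ for them. (One could alternatively verify directly that the right-hand side agrees with $f_n$ by checking that both have the same coefficients in the basis $B_n$, but the computation above is the cleanest route.)
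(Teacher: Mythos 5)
Your proof is correct: expanding $f_n$ in the basis $B_n$ and using biorthogonality $\left<b_i,d_j^{(n)}\right>=\delta_{ij}$ to identify the coefficients is exactly the standard argument, and the paper itself states this Fact without proof (citing it as well known from the reference on construction of dual bases), so there is nothing in the paper your route could diverge from. The one point you flag---that the expansion exists and is unique before the coefficients are computed, because $B_n$ is a basis---is indeed the only subtlety, and you handle it properly.
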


\begin{fact}\label{T:DualLS}
Given a function $g$,
\begin{equation*}
f_n^\ast = \sum_{i=0}^n \left<g,d_i^{(n)}\right>b_i
\end{equation*}
is the best least squares approximation of $g$ in the space $\mathcal{B}_n$, i.e.,
$$
\|g-f_n^\ast\|_2=\min_{f_n\in\mathcal{B}_n}\|g-f_n\|_2,
$$
where $\|\cdot\|_2:=\sqrt{\left<\cdot,\cdot\right>}$ denotes the least squares norm.
\end{fact}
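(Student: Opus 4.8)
The plan is to recognize this as the classical best-approximation (orthogonal projection) theorem in an inner product space, and to verify that $f_n^\ast$ is precisely the orthogonal projection of $g$ onto $\mathcal{B}_n$. The standard characterization says that $f_n^\ast \in \mathcal{B}_n$ minimizes $\|g - f_n\|_2$ over $\mathcal{B}_n$ exactly when the residual $g - f_n^\ast$ is orthogonal to the whole space $\mathcal{B}_n$. So the heart of the argument is to check this orthogonality for the specific $f_n^\ast$ given by the dual expansion.

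First I would establish orthogonality by testing $g - f_n^\ast$ against the dual functions rather than against the primal basis, because only the pairing $\langle b_i,d_j^{(n)}\rangle=\delta_{ij}$ collapses cleanly. Using linearity of the inner product and biorthogonality, one computes
$$
\langle f_n^\ast, d_j^{(n)}\rangle = \sum_{i=0}^n \langle g,d_i^{(n)}\rangle\,\langle b_i,d_j^{(n)}\rangle = \langle g,d_j^{(n)}\rangle \qquad (j=0,1,\ldots,n),
$$
so that $\langle g-f_n^\ast, d_j^{(n)}\rangle=0$ for every $j$. Since $\mbox{span}\,D_n=\mathcal{B}_n$, these identities extend by linearity to $\langle g-f_n^\ast, h\rangle=0$ for all $h\in\mathcal{B}_n$; that is, $g-f_n^\ast$ is orthogonal to $\mathcal{B}_n$.

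Next I would derive the minimization from this orthogonality via the Pythagorean identity. For any $f_n\in\mathcal{B}_n$, write $g-f_n=(g-f_n^\ast)+(f_n^\ast-f_n)$, note that $f_n^\ast-f_n\in\mathcal{B}_n$, and expand the squared norm. The cross term $\langle g-f_n^\ast, f_n^\ast-f_n\rangle$ vanishes by the orthogonality just proved, leaving
$$
\|g-f_n\|_2^2 = \|g-f_n^\ast\|_2^2 + \|f_n^\ast-f_n\|_2^2 \geq \|g-f_n^\ast\|_2^2,
$$
with equality precisely when $f_n=f_n^\ast$. Taking square roots yields the claimed identity, and as a bonus the uniqueness of the minimizer.

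The only delicate point is interpretational rather than computational: the inner product in the statement is declared on $\mathcal{B}_n\times\mathcal{B}_n$, yet $\langle g,d_i^{(n)}\rangle$ presumes $g$ lies in a larger space on which the same (discrete, in the present application) inner product is defined. I would therefore make explicit at the outset that $\langle\cdot,\cdot\rangle$ and $\|\cdot\|_2$ are understood in that ambient space, after which every step above is routine. I expect no genuine obstacle beyond this bookkeeping, since the result is simply the finite-dimensional projection theorem dressed in dual-basis notation.
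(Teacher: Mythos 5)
Your proof is correct. The paper itself gives no proof of this Fact---it is stated as well known, with a pointer to \cite{Woz13}---and your argument is exactly the standard one behind that citation: biorthogonality gives $\left<g-f_n^\ast,d_j^{(n)}\right>=0$ for all $j$, the spanning property $\mbox{span}\,D_n=\mathcal{B}_n$ upgrades this to orthogonality of the residual against all of $\mathcal{B}_n$, and the Pythagorean identity yields the minimization (your side remark that $\left<\cdot,\cdot\right>$ must be understood on an ambient space containing $g$ is also the right reading of the paper's setup, where $g$ is $\varphi_i$ and the discrete inner product $\left<\cdot,\cdot\right>_T$ makes sense for it).
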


Recently, dual bases with their applications in numerical analysis and CAGD have been extensively studied.
For example, dual Bernstein polynomials have found their application in several important algorithms associated with B\'ezier curves
(see, e.g., \cite{GLW15,GLW17,Lu15,WGL15,WL09}). For similar algorithms concerning B\'ezier surfaces, see \cite{LW11b} and the list of references given there. In \cite{GW16,Woz14}, one can find some results on \textit{dual B-spline functions}.  Dual basis functions in subspaces of inner product spaces were discussed in \cite{Ker13}. As for properties of dual bases in general, see \cite{Woz13,Woz14}.

\section{Solving the subproblems -- an idea for the improvement}\label{Sec:Idea}

In this section, we recall some basic facts about Problem~\ref{P:Problem} and the iterative algorithm of solving it (for a more detailed description of the full algorithm, see \cite[Section 5 and Appendix]{Gos15}). We also establish a connection between consecutive iterations of the algorithm, which leads us
to an idea for the improvement of the method.

First, recall that the continuity conditions~\eqref{EQ:cont} imply the well-known formulas \cite[(3.4)]{Gos15} and \cite[(3.5)]{Gos15} for the control points $r_0,r_1,\ldots,r_\alpha$ and $r_{m-\beta},r_{m-\beta+1},\ldots,r_{m}$, respectively. Moreover, Problem~\ref{P:Problem} can be solved in a componentwise way. Therefore, it is sufficient to explain how to compute $r^x_{\alpha+1},r^x_{\alpha+2},\ldots,r^x_{m-\beta-1}$ (see \cite[Remark 3.3]{Gos15}).

Now, we recall some rules of the iterative algorithm from \cite{Gos15}, and focus mainly on the subproblem which is an essential part of every iteration of that algorithm.
We start with some definitions. We define a set $\mathcal{C} := \left\{0,1,\ldots,\alpha,m-\beta,m-\beta+1,\ldots,m\right\}$. Notice that it contains indices of variables which are already computed thanks to the continuity conditions~\eqref{EQ:cont}. Let $\mathcal{F}_i$ be a set of indices of variables satisfying strict version of inequalities~\eqref{EQ:box} before solving the subproblem in the $i$th iteration. Furthermore, we assume that sets $\mathcal{L}_i$ and $\mathcal{U}_i$ contain indices of variables which, before solving the subproblem in the $i$th iteration, have the minimum and maximum permissible value, respectively (cf.~\eqref{EQ:box}). In the $i$th iteration, the subproblem is formulated as follows.
\begin{prob}\,[\textsf{$i$th subproblem}]\label{P:SubProblem}\\
Find the optimal values of variables (i.e., coordinates of the control points) whose indices are in set $\mathcal{F}_i$ subject to the fixed values of variables whose indices are in set $\mathcal{C} \cup \mathcal{L}_i \cup \mathcal{U}_i$. More precisely, we look for
$\psi_i^{\ast} \in \Pi_m^{\mathcal{F}_i}$ such that:
\begin{equation}\label{EQ:SubProb}
\|\varphi_i - \psi_i^{\ast}\|_2^T = \min_{\psi_i\in\Pi_m^{\mathcal{F}_i}}\|\varphi_i - \psi_i\|_2^T
\end{equation}
(cf.~\eqref{EQ:min}), where
\begin{align*}
&\Pi_m^{\mathcal{A}} := \textnormal{span}\left\{B_j^m \::\: j \in \mathcal{A} \right\},\\[1ex]
&\varphi_i := \sum_{h = 0}^{n} p_{h}^xB_h^{n} - \sum_{j \in \mathcal{C}}r_j^xB_j^m - \sum_{j \in \mathcal{L}_i}l_xB_j^m - \sum_{j \in \mathcal{U}_i}u_xB_j^m.
\end{align*}
\end{prob}

\begin{remark}\normalfont
As we shall see, the consecutive subproblems are \textit{quite similar} and related. We did not use that fact in the previous article \cite{Gos15},
where the \textit{older method} deals with a system of normal equations in every iteration of the algorithm in order to solve the subproblems.
In this paper, our goal is to avoid this inefficient approach which additionally can be associated with matrix inversion (cf.~\cite[(A.3)]{Gos15}).
Recall that matrix inversion is considered to be risky from the numerical point of view (see, e.g., \cite[Section 14]{Hig02}).
\end{remark}

\noindent Now, let us analyze the consecutive subproblems.

\textbf{The first subproblem.} At the beginning of the algorithm, we set
$$
\mathcal{L}_1 = \mathcal{U}_1 := \emptyset, \qquad \mathcal{F}_1 := \left\{\alpha+1, \alpha+2,\ldots, m-\beta-1\right\}
$$
(see \cite[Section 5, Step 1]{Gos15}). Therefore, we have
$$
\varphi_1 := \sum_{h = 0}^{n} p_{h}^xB_h^{n} - \sum_{j \in \mathcal{C}}r_j^xB_j^m,
$$
and the first subproblem is to compute the optimal element  $\psi_1^{\ast} \in \Pi_m^{\mathcal{F}_1}$
written in the basis
\begin{equation}\label{E:First}
\left\{B_j^m \::\: j \in \mathcal{F}_{1}\right\} = \left\{B^m_{\alpha+1}, B^m_{\alpha+2},\ldots, B^m_{m-\beta-1}\right\}.
\end{equation}

Next, let us consider the $i$th subproblem $(i > 1)$ and its relation with the previous one.
There are two possibilities.

\textbf{Case 1 of the $\bm{i}$th subproblem $\bm{(i > 1)}$.} One element $q$ was transferred from $\mathcal{L}_{i-1}$ or $\mathcal{U}_{i-1}$ to $\mathcal{F}_{i-1}$ (see \cite[Section 5, Step 2]{Gos15}). Therefore, we set $\mathcal{F}_{i} := \mathcal{F}_{i-1} \cup \left\{q\right\}$ and (($\mathcal{L}_{i} := \mathcal{L}_{i-1} \setminus \left\{q\right\}\,\wedge\,\mathcal{U}_{i} := \mathcal{U}_{i-1}$) or ($\mathcal{U}_{i} := \mathcal{U}_{i-1} \setminus \left\{q\right\}\,\wedge\,\mathcal{L}_{i} := \mathcal{L}_{i-1}$)).
According to~\eqref{EQ:SubProb}, for the given
\begin{equation}\label{E:phi1}
\varphi_i := \varphi_{i-1} + sB^m_q \qquad (s = l_x \;\;\mbox{or}\;\; s = u_x),
\end{equation}
we look for the optimal element $\psi_i^{\ast} \in \Pi_m^{\mathcal{F}_i}$ written in the basis $\{B_j^m \::\: j \in \mathcal{F}_{i}\}$. Notice that this basis is related with the one from the previous iteration, i.e.,
\begin{equation}\label{E:Case1}
\left\{B_j^m \::\: j \in \mathcal{F}_{i} \right\} = \left\{B_h^m \::\: h \in \mathcal{F}_{i-1} \right\} \cup \left\{B_q^m\right\}.
\end{equation}

\textbf{Case 2 of the $\bm{i}$th subproblem $\bm{(i > 1)}$.} At least one element was transferred from $\mathcal{F}_{i-1}$ to $\mathcal{L}_{i-1}$ or $\mathcal{U}_{i-1}$ (see \cite[Section 5, Step 5]{Gos15}). Apart from very rare cases, exactly one element $q$ was transferred. If the rare case occurred, then the procedure that transfers one element should be applied repeatedly. We set $\mathcal{F}_{i} := \mathcal{F}_{i-1} \setminus \left\{q\right\}$ and (($\mathcal{L}_{i} := \mathcal{L}_{i-1} \cup \left\{q\right\}\,\wedge\,\mathcal{U}_{i} := \mathcal{U}_{i-1}$) or ($\mathcal{U}_{i} := \mathcal{U}_{i-1} \cup \left\{q\right\}\,\wedge\,\mathcal{L}_{i} := \mathcal{L}_{i-1}$)). This time, we have
\begin{equation}\label{E:phi2}
\varphi_i := \varphi_{i-1} - sB^m_q \qquad (s = l_x \;\;\mbox{or}\;\; s = u_x)
\end{equation}
and, according to~\eqref{EQ:SubProb}, we look for the optimal element $\psi_i^{\ast} \in
\Pi_m^{\mathcal{F}_i}$ written in the basis $\{B_j^m \::\: j \in \mathcal{F}_{i}\}$.
One can see clearly that this basis is related with the previous one, i.e.,
\begin{equation}\label{E:Case2}
\left\{B_j^m \::\: j \in \mathcal{F}_{i} \right\} = \left\{B_h^m \::\: h \in \mathcal{F}_{i-1} \right\} \setminus \left\{B_q^m\right\}
\end{equation}
(cf.~\eqref{E:Case1}).

\textbf{The goal.} According to Fact~\ref{T:DualLS} (cf.~Problem~\ref{P:SubProblem}), dual bases are useful in solving least squares problems such as the subproblems that we are dealing with in each iteration. Our goal is to solve every subproblem using dual bases. In order to do so, we must have the dual bases for the bases~\eqref{E:First}, \eqref{E:Case1} and \eqref{E:Case2} with respect to the inner product
$$
\langle f,g\rangle_T := \sum_{k=0}^Nf(t_k)g(t_k)
$$
(cf.~\eqref{EQ:min}), i.e., the dual basis must be updated in each iteration. Then, for each subproblem, we can use Fact~\ref{T:DualLS} to get its optimal solution $\psi_i^{\ast}$.\\

\noindent\textbf{The plan.}
\vspace{-2mm}
\begin{itemize}
\item[\textbf{I.}] In the first iteration, the dual basis for the basis~\eqref{E:First} must be computed from scratch. We recall such an algorithm in Section~\ref{Sec:Dual} (see Algorithm~\ref{A:DN}).
\item[\textbf{II.}]In the $i$th iteration $(i > 1)$, we must find a way to update the dual basis from the previous iteration in an efficient way. As a result, we will obtain the current dual basis for the $i$th iteration. There are two cases to solve.
\begin{itemize}
\item[\textbf{1.}] In case 1, one basis element is being added (see~\eqref{E:Case1}). Therefore, having the dual basis from the previous iteration, our algorithm should compute the dual basis for the expanded basis~\eqref{E:Case1}. An algorithm of such a type was already given in \cite{Woz13,Woz14}. We recall it in Section~\ref{Sec:Dual} (see Algorithm~\ref{A:DnDn1}).
\item[\textbf{2.}] In case 2, we are dealing with the reverse problem, i.e., one basis element is being removed (see~\eqref{E:Case2}). More precisely, having the dual basis from the previous iteration, our algorithm should compute the dual basis for the reduced basis~\eqref{E:Case2}. This is a new problem and its solution has never been published before. We give it in Section~\ref{Sec:Prop} (see Algorithm~\ref{A:Dn1Dn}).
\end{itemize}
\end{itemize}

\section{Construction of dual bases -- earlier work}\label{Sec:Dual}

Suppose that $B_n$ is the given basis of the space $\mathcal{B}_n$ and the dual basis $D_{n}$ with respect
to the inner product $\left<\cdot,\cdot\right>$ is known as well (here we use the notation from Section~\ref{Sec:DIntro}).
In \cite{Woz14}, one of us proposed an efficient method of constructing the dual basis
$$
D_{n+1} := \left\{d_0^{(n+1)},d_1^{(n+1)},\ldots,d_{n+1}^{(n+1)}\right\}
$$
for $B_{n+1} := B_n \cup \left\{b_{n+1}\right\}$. See also the previous method given in \cite{Woz13}.
Further on in this section, we recall the connection between $D_{n}$ and $D_{n+1}$ as well as the algorithm of constructing $D_{n+1}$.
Notice that this is exactly the algorithm that is needed in case 1 of the $i$th subproblem $(i > 1)$ (see Section~\ref{Sec:Idea}).

\begin{thm}[\cite{Woz14}]\label{T:RelDnDn1}
The dual functions from $D_n$ and $D_{n+1}$ are related in the following way:
\begin{equation}\label{EQ:RelDnDn1}
d^{(n+1)}_i = d^{(n)}_i - w_{i}^{(n+1)}d^{(n+1)}_{n+1}\qquad (i=0,1,\ldots,n),	
\end{equation}
where
\begin{equation}\label{EQ:wcoeff1}
w_{i}^{(n+1)} := \left<d^{(n)}_i,b_{n+1}\right>.
\end{equation}
\end{thm}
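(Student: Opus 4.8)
The plan is to verify that the functions on the right-hand side of~\eqref{EQ:RelDnDn1} satisfy the two defining properties of the dual basis $D_{n+1}$, and then to invoke uniqueness of the dual basis. Because a dual basis relative to a fixed inner product on a finite-dimensional space is determined uniquely by the biorthogonality relations (the Gram matrix of $B_{n+1}$ being invertible), it suffices to check that the candidate functions
\[
\tilde{d}_i := d_i^{(n)} - w_i^{(n+1)}d_{n+1}^{(n+1)} \qquad (i=0,1,\ldots,n),
\]
together with $d_{n+1}^{(n+1)}$ itself, reproduce $\langle b_k, \tilde{d}_i\rangle = \delta_{ki}$ for every $k=0,1,\ldots,n+1$. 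Note that $d_{n+1}^{(n+1)}$ is here treated as already available, as the member of $D_{n+1}$ whose companion elements we are pinning down.

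First I would observe that each $\tilde{d}_i$ lies in $\mathcal{B}_{n+1}$: since $d_i^{(n)}\in\mathcal{B}_n\subseteq\mathcal{B}_{n+1}$ and $d_{n+1}^{(n+1)}\in\mathcal{B}_{n+1}$, the linear combination stays in the span, which matches the first condition in the definition of a dual basis. The biorthogonality check then splits according to the value of $k$. For $0\le k\le n$ I expand
\[
\langle b_k, \tilde{d}_i\rangle = \langle b_k, d_i^{(n)}\rangle - w_i^{(n+1)}\langle b_k, d_{n+1}^{(n+1)}\rangle = \delta_{ki} - w_i^{(n+1)}\cdot 0 = \delta_{ki},
\]
using the duality of $D_n$ for the first term and the duality of $D_{n+1}$ (with $k\ne n+1$) to annihilate the second.

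The decisive case is $k=n+1$, where the correction term earns its keep. Using the symmetry of the inner product, the definition~\eqref{EQ:wcoeff1} of $w_i^{(n+1)}$, and $\langle b_{n+1}, d_{n+1}^{(n+1)}\rangle=1$, I get
\[
\langle b_{n+1}, \tilde{d}_i\rangle = \langle b_{n+1}, d_i^{(n)}\rangle - w_i^{(n+1)}\langle b_{n+1}, d_{n+1}^{(n+1)}\rangle = w_i^{(n+1)} - w_i^{(n+1)} = 0 = \delta_{n+1,i}.
\]
Having verified $\langle b_k, \tilde{d}_i\rangle=\delta_{ki}$ over the full index range, uniqueness of the dual basis forces $\tilde{d}_i = d_i^{(n+1)}$, which is exactly~\eqref{EQ:RelDnDn1}.

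I expect the main obstacle to be conceptual rather than computational: each displayed line is a one-step application of biorthogonality, so the real content lies in choosing the right ansatz and justifying the conclusion. One must recognize that the coefficient $w_i^{(n+1)}=\langle d_i^{(n)}, b_{n+1}\rangle$ is forced precisely so as to cancel the pairing $\langle b_{n+1}, d_i^{(n)}\rangle$ that would otherwise spoil biorthogonality, and one must be sure that uniqueness of the dual basis is genuinely in hand, so that the verified relations identify $\tilde{d}_i$ with $d_i^{(n+1)}$ rather than merely producing some valid dual.
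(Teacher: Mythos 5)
Your proposal is correct: the biorthogonality checks for $k\le n$ and $k=n+1$ are both right, and the appeal to uniqueness of the dual basis (valid because the inner product is positive definite, so $d\mapsto(\langle b_k,d\rangle)_k$ is injective on $\mathcal{B}_{n+1}$) legitimately identifies $\tilde{d}_i$ with $d_i^{(n+1)}$. Note that this paper states Theorem~\ref{T:RelDnDn1} without proof, citing \cite{Woz14}; your verify-and-invoke-uniqueness argument is essentially the standard derivation underlying that cited construction (equivalently phrased there by observing that $d_i^{(n+1)}-d_i^{(n)}$ is orthogonal to $\mathcal{B}_n$ and hence a scalar multiple of $d_{n+1}^{(n+1)}$, the scalar being fixed by pairing with $b_{n+1}$), so it matches the intended proof in both substance and spirit.
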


\noindent As a result of Theorem~\ref{T:RelDnDn1}, each dual function $d_i^{(n+1)}$ $(i=0,1,\ldots,n)$ depends on
$d_i^{(n)}$ which is known, and on $d^{(n+1)}_{n+1}$ which must be computed. Note that
$\mbox{span}\,B_{n+1} = \mbox{span}\left(D_{n}\cup\left\{b_{n+1}\right\}\right)$. Therefore, we can write
\begin{equation}\label{EQ:dn1n1}
d_{n+1}^{(n+1)} = \sum_{h=0}^{n}c_h^{(n+1)}d_{h}^{(n)} + c_{n+1}^{(n+1)}b_{n+1},
\end{equation}
and solve the following system of linear equations:
\begin{equation*}
\left\{\begin{array}{ll}
\displaystyle 0 = \left<d^{(n+1)}_{n+1},b_{i}\right> = c_i^{(n+1)} + c_{n+1}^{(n+1)}v_{i}^{(n+1)}
\qquad (i=0,1,\ldots,n),\\[1ex]
\displaystyle 1 = \left<d^{(n+1)}_{n+1},b_{n+1}\right> = \sum_{h=0}^{n}c_h^{(n+1)}w_h^{(n+1)} + c_{n+1}^{(n+1)}v_{n+1}^{(n+1)},
\end{array}\right.
\end{equation*}
where
\begin{equation}\label{EQ:vcoeff}
v_{j}^{(n+1)} := \left<b_{n+1},b_j\right> \qquad (j=0,1,\ldots,n+1),
\end{equation}
for the coefficients $c_0^{(n+1)},c_1^{(n+1)},\ldots,c_{n+1}^{(n+1)}$. According to \cite[Section 2]{Woz14}, the solution is simple, namely
\begin{align}
&c_{n+1}^{(n+1)} = \left(v_{n+1}^{(n+1)} - \sum_{h=0}^{n}v_{h}^{(n+1)}w_{h}^{(n+1)}\right)^{-1},\label{EQ:ccoeff1}\\[1ex]
&c_h^{(n+1)} = -v_h^{(n+1)}c_{n+1}^{(n+1)} \qquad (h=0,1,\ldots,n).\label{EQ:ccoeff2}
\end{align}

\noindent  The above-described idea is summarized in the following algorithm. We use this algorithm to compute the dual basis for the basis~\eqref{E:Case1}
in case 1 of the $i$th subproblem $(i > 1)$ (see Section~\ref{Sec:Idea}).

\begin{alg}[\cite{Woz14}]\,[\textsf{$D_n \Longrightarrow D_{n+1}$}]\label{A:DnDn1}\\[1.5ex]
\noindent \texttt{Input}: $D_{n} = \left\{d_0^{(n)},d_1^{(n)},\ldots,d_{n}^{(n)}\right\}$, $B_{n+1} = \left\{b_0,b_1,\ldots,b_{n+1}\right\}$\\[1ex]
\texttt{Output}: $D_{n+1} = \left\{d_0^{(n+1)},d_1^{(n+1)},\ldots,d_{n+1}^{(n+1)}\right\}$
\begin{description}
\itemsep2pt
\item[\texttt{Step 1}.] Compute $w_{i}^{(n+1)}$ $(i=0,1,\ldots,n)$ by~\eqref{EQ:wcoeff1}.
\item[\texttt{Step 2}.] Compute $v_{j}^{(n+1)}$ $(j=0,1,\ldots,n+1)$ by~\eqref{EQ:vcoeff}.
\item[\texttt{Step 3}.] Compute $c^{(n+1)}_{n+1}$ by~\eqref{EQ:ccoeff1}.
\item[\texttt{Step 4}.] Compute $c^{(n+1)}_{h}$ $(h=0,1,\ldots,n)$ by~\eqref{EQ:ccoeff2}.
\item[\texttt{Step 5}.] Compute $d^{(n+1)}_{n+1}$ by~\eqref{EQ:dn1n1}.
\item[\texttt{Step 6}.] Compute $d_i^{(n+1)}$ $(i=0,1,\ldots,n)$ by~\eqref{EQ:RelDnDn1}.
\item[\texttt{Step 7}.] Return the dual basis $\left\{d_0^{(n+1)},d_1^{(n+1)},\ldots,d_{n+1}^{(n+1)}\right\}$.
\end{description}
\end{alg}

The next algorithm computes a sequence of dual bases $D_0,D_1,\ldots,D_L$.
We use this algorithm to compute from scratch the dual basis for the basis~\eqref{E:First}
in the first iteration of the algorithm of solving Problem~\ref{P:Problem} (see Section~\ref{Sec:Idea}).
Since we are looking for the dual basis for the full basis~\eqref{E:First},
in our case only the last dual basis $D_L$ is needed.

\begin{alg}[\cite{Woz14}]\,[\textsf{Construction of dual bases $D_0,D_1,\ldots,D_L$}]\label{A:DN}\\[1.5ex]
\noindent \texttt{Input}: $B_{n} = \left\{b_0,b_1,\ldots,b_{n}\right\}$ $(n=0,1,\ldots,L)$\\[1ex]
\texttt{Output}: $D_{n} = \left\{d_0^{(n)},d_1^{(n)},\ldots,d_{n}^{(n)}\right\}$ $(n=0,1,\ldots,L)$
\begin{description}
\itemsep2pt
\item[\texttt{Step 1}.] Set $D_0 := \left\{\left<b_0,b_0\right>^{-1}b_0\right\}$.
\item[\texttt{Step 2}.] Compute $D_n$ $(n=1,2,\ldots,L)$ using Algorithm~\ref{A:DnDn1}.
\item[\texttt{Step 3}.] Return the dual bases $D_{0},D_{1},\ldots,D_{L}$.
\end{description}
\end{alg}

\begin{remark}[\cite{Woz14}]\label{R:Add}\normalfont
Let $f_n^\ast \in \mathcal{B}_n$ be the best least squares approximation of a function $g$ in the space $\mathcal{B}_n$, i.e.,
\begin{equation}\label{EQ:LSA}
\|g-f_n^\ast\|_2=\min_{f_n\in\mathcal{B}_n}\|g-f_n\|_2,
\end{equation}
where
\begin{equation}\label{EQ:LSA2}
f_n^\ast = \sum_{i=0}^n e_i^{(n)}b_i
\end{equation}
with $e_i^{(n)} := \left<g,d_i^{(n)}\right>$ for $i=0,1,\ldots,n$ (see Fact~\ref{T:DualLS}).
Suppose that we know the coefficients $e_i^{(n)}$ $(i=0,1,\ldots,n)$ and our goal is to compute the optimal element
$f_{n+1}^\ast \in \mathcal{B}_{n+1}$ for the same function $g$. Then Fact~\ref{T:DualLS}, along with the formulas~\eqref{EQ:dn1n1}
and~\eqref{EQ:RelDnDn1}, yields the following relations:
\begin{equation*}
\begin{array}{l}
\displaystyle e^{(n+1)}_{n+1} = \sum_{h = 0}^{n}c^{(n+1)}_he_h^{(n)}+c^{(n+1)}_{n+1}\left<g,b_{n+1}\right>,\\[3ex]
\displaystyle e^{(n+1)}_{i} =  e^{(n)}_{i} -w_i^{(n+1)}e^{(n+1)}_{n+1} \qquad (i=0,1,\ldots,n).
\end{array}
\end{equation*}
\end{remark}

\section{A new efficient method of modification of dual bases}\label{Sec:Prop}

In this section, we prove that for the given dual basis $D_{n+1} = \left\{d_0^{(n+1)},d_1^{(n+1)},\ldots,d_{n+1}^{(n+1)}\right\}$, it is possible to compute efficiently the dual basis $D_n = \left\{d_0^{(n)},d_1^{(n)},\ldots,d_{n}^{(n)}\right\}$. Notice that this is exactly the result that we need in case 2 of the $i$th subproblem $(i > 1)$ (see Section~\ref{Sec:Idea}).

\begin{remark}\normalfont
Observe that we cannot directly reverse the process from Section~\ref{Sec:Dual} using the formulas from Theorem~\ref{T:RelDnDn1}.
Clearly, we can rewrite the formula~\eqref{EQ:RelDnDn1}. However, we cannot combine~\eqref{EQ:RelDnDn1} with~\eqref{EQ:wcoeff1} because the searched coefficient $w_{i}^{(n+1)}$ in~\eqref{EQ:wcoeff1} \textbf{depends on the searched dual function} $d^{(n)}_i$. Therefore, the main goal of this section is to find a different formula for the coefficients $w_{i}^{(n+1)}$ $(i=0,1,\ldots,n)$ that \textbf{does not depend on the searched dual functions} $d^{(n)}_i$.
\end{remark}

In order to prove the main result, we will need the following lemma.

\begin{lem}\label{L:DualProp}
The following identity holds:
\begin{equation}\label{EQ:Lem}
\left< d_i^{(n)}, d_{n+1}^{(n+1)} \right> = 0 \qquad (i=0,1,\ldots,n).
\end{equation}
\end{lem}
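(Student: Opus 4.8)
The plan is to exploit the dual-basis relation~\eqref{EQ:RelDnDn1} from Theorem~\ref{T:RelDnDn1} together with the defining biorthogonality conditions for $D_{n+1}$. The key observation is that each $d_i^{(n)}$ lives in the space $\mathcal{B}_n = \mbox{span}\,B_n$, so I want to expand $d_i^{(n)}$ in terms of the basis $B_n$ and then pair it against $d_{n+1}^{(n+1)}$ using the inner product. Since $d_{n+1}^{(n+1)}$ is a dual function for $B_{n+1}$, the pairing $\left< b_j, d_{n+1}^{(n+1)} \right>$ is governed entirely by the Kronecker delta: it equals $\delta_{j,n+1}$.

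First I would write $d_i^{(n)} = \sum_{j=0}^{n} a_j b_j$ for some coefficients $a_j$, which is legitimate because $d_i^{(n)} \in \mathcal{B}_n$ and $B_n = \{b_0, b_1, \ldots, b_n\}$ spans $\mathcal{B}_n$. Then by bilinearity of the inner product,
\begin{equation*}
\left< d_i^{(n)}, d_{n+1}^{(n+1)} \right> = \sum_{j=0}^{n} a_j \left< b_j, d_{n+1}^{(n+1)} \right>.
\end{equation*}
The crucial input is that $d_{n+1}^{(n+1)}$ belongs to the dual basis $D_{n+1}$ for $B_{n+1}$, so by the defining condition $\left< b_j, d_{n+1}^{(n+1)} \right> = \delta_{j,n+1}$ for all $j = 0,1,\ldots,n+1$. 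In the sum above, the index $j$ ranges only over $0,1,\ldots,n$, so every term carries $\delta_{j,n+1} = 0$. Hence the whole sum vanishes and we obtain~\eqref{EQ:Lem}.

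I expect the only point requiring care to be the justification that $d_i^{(n)}$ admits an expansion in $B_n$ rather than needing the full basis $B_{n+1}$; this is immediate since by definition $\mbox{span}\,D_n = \mathcal{B}_n = \mbox{span}\,B_n$, so $d_i^{(n)}$ has no component along $b_{n+1}$. Once this is in place the result is essentially a one-line consequence of biorthogonality, with no genuine obstacle. An alternative, perhaps cleaner, route would avoid introducing the coefficients $a_j$ altogether: since $d_i^{(n)} \in \mathcal{B}_n$, I could apply Fact~\ref{T:DualRepr} to expand $d_i^{(n)}$ against the dual basis $D_{n+1}$ and note directly that the coefficient of $b_{n+1}$ in its representation is zero, again by $\mbox{span}\,D_n = \mathcal{B}_n$. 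Either way, the identity follows from the fact that $d_{n+1}^{(n+1)}$ is orthogonal (with respect to $\left<\cdot,\cdot\right>$) to the entire subspace $\mathcal{B}_n$, which is precisely what its biorthogonality to $b_0,\ldots,b_n$ encodes.
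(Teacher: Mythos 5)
Your proof is correct and follows essentially the same route as the paper: expand $d_i^{(n)}$ in the basis $B_n$ (the paper does this via Fact~\ref{T:DualRepr}, with coefficients $\left< d_i^{(n)}, d_j^{(n)} \right>$, but their explicit form is irrelevant, exactly as you note) and then kill every term using the biorthogonality $\left< b_j, d_{n+1}^{(n+1)} \right> = 0$ for $j = 0,1,\ldots,n$. The only cosmetic difference is that your opening sentence advertises Theorem~\ref{T:RelDnDn1}, which your argument never actually needs.
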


\begin{proof}
We use Fact~\ref{T:DualRepr} to represent each dual function $d_i^{(n)}$ as a linear combination of the elements $b_0, b_1,\ldots,b_n$,
\begin{equation*}
d_i^{(n)} = \sum_{j=0}^n \left< d_i^{(n)}, d_j^{(n)} \right>b_j.
\end{equation*}
Consequently, we have
\begin{align*}
\left< d_i^{(n)}, d_{n+1}^{(n+1)} \right> &= \left< \sum_{j=0}^n \left< d_i^{(n)}, d_j^{(n)} \right>b_j, d_{n+1}^{(n+1)} \right>\\[1ex]
&= \sum_{j=0}^n \left < d_i^{(n)}, d_j^{(n)} \right> \left< b_j, d_{n+1}^{(n+1)} \right>= 0
\end{align*}
since $\left< b_j, d_{n+1}^{(n+1)} \right> = 0$ for $j=0,1,\ldots,n$.
\end{proof}

\begin{thm}\label{T:RelDn1Dn}
The connection between the dual functions from $D_n$ and $D_{n+1}$ is as follows:
\begin{equation}\label{EQ:RelDnDn12}
d_i^{(n)} = d_i^{(n+1)} + w_i^{(n+1)}d_{n+1}^{(n+1)}\qquad (i=0,1,\ldots,n),
\end{equation}
where
\begin{equation}\label{EQ:wcoeff2}
w_i^{(n+1)} := - \frac{\left< d_i^{(n+1)}, d_{n+1}^{(n+1)} \right>}{\left< d_{n+1}^{(n+1)}, d_{n+1}^{(n+1)} \right>}
\end{equation}
(cf.~Theorem~\ref{T:RelDnDn1}).
\end{thm}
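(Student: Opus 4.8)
The plan is to start from the relation~\eqref{EQ:RelDnDn1} already established in Theorem~\ref{T:RelDnDn1} and simply transpose one term, so that the only genuinely new content becomes the alternative formula~\eqref{EQ:wcoeff2} for the coefficients $w_i^{(n+1)}$. Indeed, moving the term $w_i^{(n+1)}d_{n+1}^{(n+1)}$ to the other side of~\eqref{EQ:RelDnDn1} immediately yields the asserted relation~\eqref{EQ:RelDnDn12}, namely $d_i^{(n)} = d_i^{(n+1)} + w_i^{(n+1)}d_{n+1}^{(n+1)}$. Hence the substance of the theorem lies entirely in showing that the coefficient $w_i^{(n+1)} = \langle d_i^{(n)}, b_{n+1}\rangle$ of~\eqref{EQ:wcoeff1} admits the equivalent expression~\eqref{EQ:wcoeff2}, which, in contrast to~\eqref{EQ:wcoeff1}, does not involve the unknown dual function $d_i^{(n)}$.

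The key step is to pair~\eqref{EQ:RelDnDn1} with $d_{n+1}^{(n+1)}$ in the inner product. Applying $\langle\,\cdot\,, d_{n+1}^{(n+1)}\rangle$ to both sides of~\eqref{EQ:RelDnDn1} gives
\[
\langle d_i^{(n+1)}, d_{n+1}^{(n+1)}\rangle = \langle d_i^{(n)}, d_{n+1}^{(n+1)}\rangle - w_i^{(n+1)}\langle d_{n+1}^{(n+1)}, d_{n+1}^{(n+1)}\rangle.
\]
At this point Lemma~\ref{L:DualProp} does the essential work: it guarantees that $\langle d_i^{(n)}, d_{n+1}^{(n+1)}\rangle = 0$, so the first term on the right-hand side vanishes. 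Solving the remaining equation for $w_i^{(n+1)}$ then reproduces exactly~\eqref{EQ:wcoeff2}, where one uses that $\langle d_{n+1}^{(n+1)}, d_{n+1}^{(n+1)}\rangle \neq 0$ because $d_{n+1}^{(n+1)}$ is a nonzero element of the inner product space.

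I expect no real difficulty in the computation itself, since the orthogonality relation of Lemma~\ref{L:DualProp} collapses the argument to a single line. The conceptual crux — and the reason the lemma is isolated beforehand — is the recognition that testing~\eqref{EQ:RelDnDn1} against $d_{n+1}^{(n+1)}$, rather than against one of the basis functions $b_j$, is precisely what removes the dependence on the unknown $d_i^{(n)}$: the vanishing of $\langle d_i^{(n)}, d_{n+1}^{(n+1)}\rangle$ is exactly the fact that annihilates the term carrying the unknown, leaving a formula written purely in terms of the known dual basis $D_{n+1}$, as required for the reverse construction in case~2 of Section~\ref{Sec:Idea}.
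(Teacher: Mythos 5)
Your proposal is correct and follows essentially the same route as the paper: both arguments rearrange~\eqref{EQ:RelDnDn1} to get~\eqref{EQ:RelDnDn12} and then invoke the orthogonality $\left< d_i^{(n)}, d_{n+1}^{(n+1)} \right> = 0$ of Lemma~\ref{L:DualProp} to solve a one-line equation for $w_i^{(n+1)}$, yielding~\eqref{EQ:wcoeff2}. The only cosmetic difference is that the paper substitutes~\eqref{EQ:RelDnDn12} directly into~\eqref{EQ:Lem}, whereas you pair~\eqref{EQ:RelDnDn1} with $d_{n+1}^{(n+1)}$ first and then apply the lemma; these are the same computation, and your added remark that $\left< d_{n+1}^{(n+1)}, d_{n+1}^{(n+1)} \right> \neq 0$ is a harmless (indeed welcome) extra justification.
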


\begin{proof}
Obviously, the relation~\eqref{EQ:RelDnDn12} follows from~\eqref{EQ:RelDnDn1}.
Now, we substitute~\eqref{EQ:RelDnDn12} into the equation~\eqref{EQ:Lem} and obtain
\begin{align*}
0 &= \left< d_i^{(n+1)} + w_i^{(n+1)}d_{n+1}^{(n+1)}, d_{n+1}^{(n+1)} \right>\\[1ex]
&= \left< d_i^{(n+1)}, d_{n+1}^{(n+1)} \right> + w_i^{(n+1)}\left< d_{n+1}^{(n+1)}, d_{n+1}^{(n+1)} \right>.
\end{align*}
Hence, the formula~\eqref{EQ:wcoeff2} follows.
\end{proof}

\begin{remark}\normalfont
In contrast to~\eqref{EQ:wcoeff1}, the formula~\eqref{EQ:wcoeff2} is \textbf{independent} of $d_i^{(n)}$ $(i=0,1,\ldots,n)$.
Therefore, it can be used to compute the dual basis $D_n$, under the assumption that the dual basis $D_{n+1}$ is given.
\end{remark}

We use the following algorithm to compute the dual basis for the basis~\eqref{E:Case2} in case 2 of the $i$th subproblem $(i > 1)$
(see Section~\ref{Sec:Idea}).

\begin{alg}\,[\textsf{$D_{n+1} \Longrightarrow D_{n}$}]\label{A:Dn1Dn}\\[1.5ex]
\noindent \texttt{Input}: $D_{n+1} = \left\{d_0^{(n+1)},d_1^{(n+1)},\ldots,d_{n+1}^{(n+1)}\right\}$\\[1ex]
\texttt{Output}: $D_{n} = \left\{d_0^{(n)},d_1^{(n)},\ldots,d_{n}^{(n)}\right\}$
\begin{description}
\itemsep2pt
\item[\texttt{Step 1}.] For $i=0,1,\ldots,n$,
\begin{itemize}
\setlength{\itemindent}{-0.5cm}
\item[\texttt{\normalfont{(i)}}] compute $w_i^{(n+1)}$ by~\eqref{EQ:wcoeff2};
\item[\texttt{\normalfont{(ii)}}] compute $d_i^{(n)}$ by~\eqref{EQ:RelDnDn12}.
\end{itemize}
\item[\texttt{Step 2}.] Return the dual basis $\left\{d_0^{(n)},d_1^{(n)},\ldots,d_{n}^{(n)}\right\}$.
\end{description}
\end{alg}

\begin{remark}\normalfont
Suppose that a dual basis of a certain space is well-known or was computed earlier. In CAGD, we often look for an optimal element (in the least squares sense) which is \textit{constrained}, e.g., by some continuity conditions. As a result, we need a dual basis of a specific subspace of the well-known space. Algorithm~\ref{A:Dn1Dn} can be particularly useful in those situations. The idea from Section~\ref{Sec:Idea} is only an example of its application.
\end{remark}

\begin{remark}\label{R:Rem}\normalfont
Let $f_{n+1}^\ast \in \mathcal{B}_{n+1}$ be the best least squares approximation of a function $g$ in the space $\mathcal{B}_{n+1}$, i.e.,
$$
\|g-f_{n+1}^\ast\|_2=\min_{f_{n+1}\in\mathcal{B}_{n+1}}\|g-f_{n+1}\|_2,
$$
where
$$
f_{n+1}^\ast = \sum_{j=0}^{n+1} e_j^{(n+1)}b_j
$$
with $e_j^{(n+1)} := \left<g,d_j^{(n+1)}\right>$ for $j=0,1,\ldots,n+1$ (see Fact~\ref{T:DualLS}).
Suppose that we know the coefficients $e_j^{(n+1)}$ $(j=0,1,\ldots,n+1)$ and our goal is to compute the optimal element
$f_{n}^\ast \in \mathcal{B}_{n}$ for the same function $g$ (see~\eqref{EQ:LSA} and \eqref{EQ:LSA2}). Then Fact~\ref{T:DualLS}, along with the formula~\eqref{EQ:RelDnDn12},
yields the following relation:
\begin{equation*}
e^{(n)}_{i} =  e^{(n+1)}_{i} + w_i^{(n+1)}e^{(n+1)}_{n+1} \qquad (i=0,1,\ldots,n)
\end{equation*}
(cf.~Remark~\ref{R:Add}).
\end{remark}

\begin{remark}\normalfont
Recall that in both cases of the $i$th subproblem $(i > 1)$ in Section~\ref{Sec:Idea}, $\varphi_i$ only \textit{slightly} differs from $\varphi_{i-1}$
(see \eqref{E:phi1} and \eqref{E:phi2}). Similarly as in Remarks~\ref{R:Add} and~\ref{R:Rem}, one can obtain formulas connecting the coefficients of the new optimal element $\psi_i^{\ast}$ and the previous one $\psi_{i-1}^{\ast}$. In case 2, the formula is simple, efficient and worth considering. For details, see our implementation in $\mbox{Maple}{\small \texttrademark}13$ available on the webpage \url{http://www.ii.uni.wroc.pl/~pgo/papers.html}.
\end{remark}

\section{Examples}\label{Sec:Ex}

In this section, we consider the problem of separate degree reduction of sixteen segments of the composite B\'ezier curve ``Octopus''
(see Figure~\ref{figure1a}). Notice that the original control points are located very close to the plot of the composite curve (see Figure~\ref{figure1b}).

The results have been obtained on a computer with \texttt{Intel Core i5-3337U 1.8GHz} processor and \texttt{8GB} of \texttt{RAM}, using $16$-digit arithmetic. $\mbox{Maple}{\small \texttrademark}13$ worksheet containing programs and tests can be found at \url{http://www.ii.uni.wroc.pl/~pgo/papers.html}. Text file with the control points of the composite B\'ezier curve ``Octopus''  is available as well.

We apply the algorithms independently to every segment of the composite curve. In each case, we use the sequence $T = \{t_k\}^N_{k=0}$  of equally spaced points for the least squares distance~\eqref{EQ:min}, i.e., we set $t_k := k/N$ $(k = 0,1,\ldots,N)$.
In Table~\ref{tab:table1}, we give the parameters, least squares errors $E$ (see~\eqref{EQ:min}) and maximum errors
$$
E_{\infty} := \max_{t \in S_M} ||P_n(t) - R_m(t)|| \approx \max_{0 \leq t \leq 1} ||P_n(t) - R_m(t)||,
$$
where $S_M := \left\{0, 1/M, 2/M,\ldots, 1\right\}$ with $M := 500$.

As a result of the traditional degree reduction (see Remark~\ref{R:Trad}), we obtain the composite curve with the control points shown in Figure~\ref{figure1c}. Clearly, some of the control points are located far away from the plot of the curve (cf.~Figure~\ref{figure1b}).

Next, to perform degree reduction with box constraints (see Problem~\ref{P:Problem}), we use the new idea from Section~\ref{Sec:Idea} combined with the algorithms from Sections~\ref{Sec:Dual} and~\ref{Sec:Prop}. For each resulting B\'{e}zier curve, the box constraints~\eqref{EQ:box} were chosen so that the searched control points are placed inside the rectangular area bounded by the outermost control points of the original corresponding B\'ezier curve. More precisely, we set
\begin{equation*}
l_z := \min_{0 \leq k \leq n}p_k^z, \qquad u_z := \max_{0 \leq k \leq n}p_k^z \qquad (z = x, y).
\end{equation*}
The resulting composite curve with its control points is illustrated in Figure~\ref{figure1d}. This time, the location of the control points is much more satisfying (cf.~Figure~\ref{figure1c}). However, because of the additional restrictions~\eqref{EQ:box}, the larger errors are unavoidable (see Table~\ref{tab:table1}).

In Table~\ref{tab:table2}, we give the comparison of total running times between the box-constrained degree reduction
from \cite[Section 5 and Appendix]{Gos15} and the new method. Clearly, the approach presented in this paper is approximately
two times faster than the older one.

\begin{table}[H]
\captionsetup{margin=0pt, font={scriptsize}}
\centering
\ra{1.5}
\scalebox{0.8}{
\begin{tabular}{@{}clcccccccccccc@{}}
 \toprule \multicolumn{8}{c}{Input data} & \phantom{a} &
\multicolumn{2}{c}{Traditional degree reduction} & \phantom{a} &
\multicolumn{2}{c}{Problem~\ref{P:Problem}}\\
 \cmidrule{1-8} \cmidrule{10-11}\cmidrule{13-14} & Curves & & $n$ & $m$ & $N$ & $\alpha$ & $\beta$ & & $E$ & $E_{\infty}$ & & $E$ & $E_{\infty}$
\\ \midrule
\multirow{16}{*}{\begin{sideways}Octopus\end{sideways}}
& Head: left side & & $9$ & $7$ & $20$ & $2$ & $1$ &  & $5.07e{-}4$ & $2.30e{-}4$ &  & $5.72e{-}3$ & $2.32e{-}3$\\

& Head: right side & & $9$  & $7$ & $20$ & $1$ & $0$ &  & $9.05e{-}5$ & $5.12e{-}5$ & & $2.29e{-}3$ & $8.86e{-}4$\\

& $1$st arm: part 1 & & $15$  & $9$ & $23$ & $0$ & $0$ &  & $2.65e{-}4$ & $1.58e{-}4$ & & $3.53e{-}3$ & $1.40e{-}3$\\

& $1$st arm: part 2 & & $17$  & $9$ & $28$ & $0$ & $1$ &  & $1.59e{-}3$ & $7.86e{-}4$ & & $1.87e{-}3$ & $8.92e{-}4$\\

& $2$nd arm: part 1 & & $14$ & $10$ & $26$ & $1$ & $0$ &  & $1.62e{-}4$ & $9.51e{-}5$ & & $1.32e{-}2$ & $4.41e{-}3$\\

& $2$nd arm: part 2 & & $14$ & $10$ & $26$ & $0$ & $1$ &  & $7.35e{-}5$ & $3.54e{-}5$ & & $4.44e{-}3$ & $2.44e{-}3$\\

& $3$rd arm: part 1 & & $13$ & $7$ & $25$ & $1$ & $1$ &  & $2.50e{-}3$ & $9.56e{-}4$  & & $2.62e{-}2$ & $9.03e{-}3$\\

& $3$rd arm: part 2 & & $11$ & $7$ & $23$ & $1$ & $0$ &  & $8.73e{-}4$ & $5.13e{-}4$ & & $4.05e{-}3$ & $1.62e{-}3$\\

& $4$th arm & & $11$ & $7$ & $23$ & $0$ & $0$ &  & $2.78e{-}3$ & $1.47e{-}3$ & & $2.61e{-}2$ & $1.01e{-}2$\\

& $5$th arm & & $18$ & $11$ & $23$ & $0$ & $0$ &  & $1.51e{-}4$ & $2.91e{-}4$ & & $6.14e{-}3$ & $2.41e{-}3$\\

& $6$th arm: part 1 & & $12$ & $7$ & $28$ & $0$ & $0$ &  & $1.44e{-}3$ & $6.37e{-}4$ & & $6.97e{-}3$ & $2.71e{-}3$\\

& $6$th arm: part 2 & & $17$ & $9$ & $29$ & $0$ & $2$ &  & $8.48e{-}4$ & $5.16e{-}4$ & & $1.39e{-}2$ & $4.26e{-}3$\\

& $7$th arm: part 1 & & $15$ & $9$ & $29$ & $2$ & $0$ &  & $9.86e{-}4$ & $3.60e{-}4$ & & $4.39e{-}3$ & $1.81e{-}3$\\

& $7$th arm: part 2 & & $11$ & $7$ & $25$ & $0$ & $2$ &  & $8.95e{-}4$ & $3.54e{-}4$ & & $8.95e{-}4$ & $3.54e{-}4$\\

& $8$th arm: part 1 & & $16$ & $9$ & $29$ & $2$ & $0$ &  & $1.21e{-}3$ & $4.44e{-}4$& & $1.38e{-}2$& $4.66e{-}3$\\

& $8$th arm: part 2 & & $9$ & $6$ & $18$ & $0$ & $2$ &  & $2.29e{-}3$ & $1.05e{-}3$& & $2.29e{-}3$& $1.05e{-}3$\\
\hline
\end{tabular}}
\caption{The results of separate degree reduction of segments of the composite B\'{e}zier curve ``Octopus''.}
\label{tab:table1}
\end{table}

\begin{table}[H]
\captionsetup{margin=0pt, font={scriptsize}}
\centering
\ra{1.5}
\scalebox{0.8}{
\begin{tabular}{@{}cccc@{}}
\toprule & \phantom{a} & Without the use of dual bases (\cite{Gos15}) & With the use of dual bases (present paper)\\ \midrule
Running times [s] & & $2.436$ & $1.249$\\
\hline
\end{tabular}}
\caption{Total running times of separate box-constrained degree reduction of segments of the composite B\'{e}zier curve ``Octopus''. For the parameters, see Table~\ref{tab:table1}.}
\label{tab:table2}
\end{table}

\begin{figure}[H]
\captionsetup{margin=0pt, font={scriptsize}}
\begin{center}
\setlength{\tabcolsep}{0mm}
\begin{tabular}{c}
\subfloat[]{\label{figure1a}\includegraphics[width=0.45\textwidth]{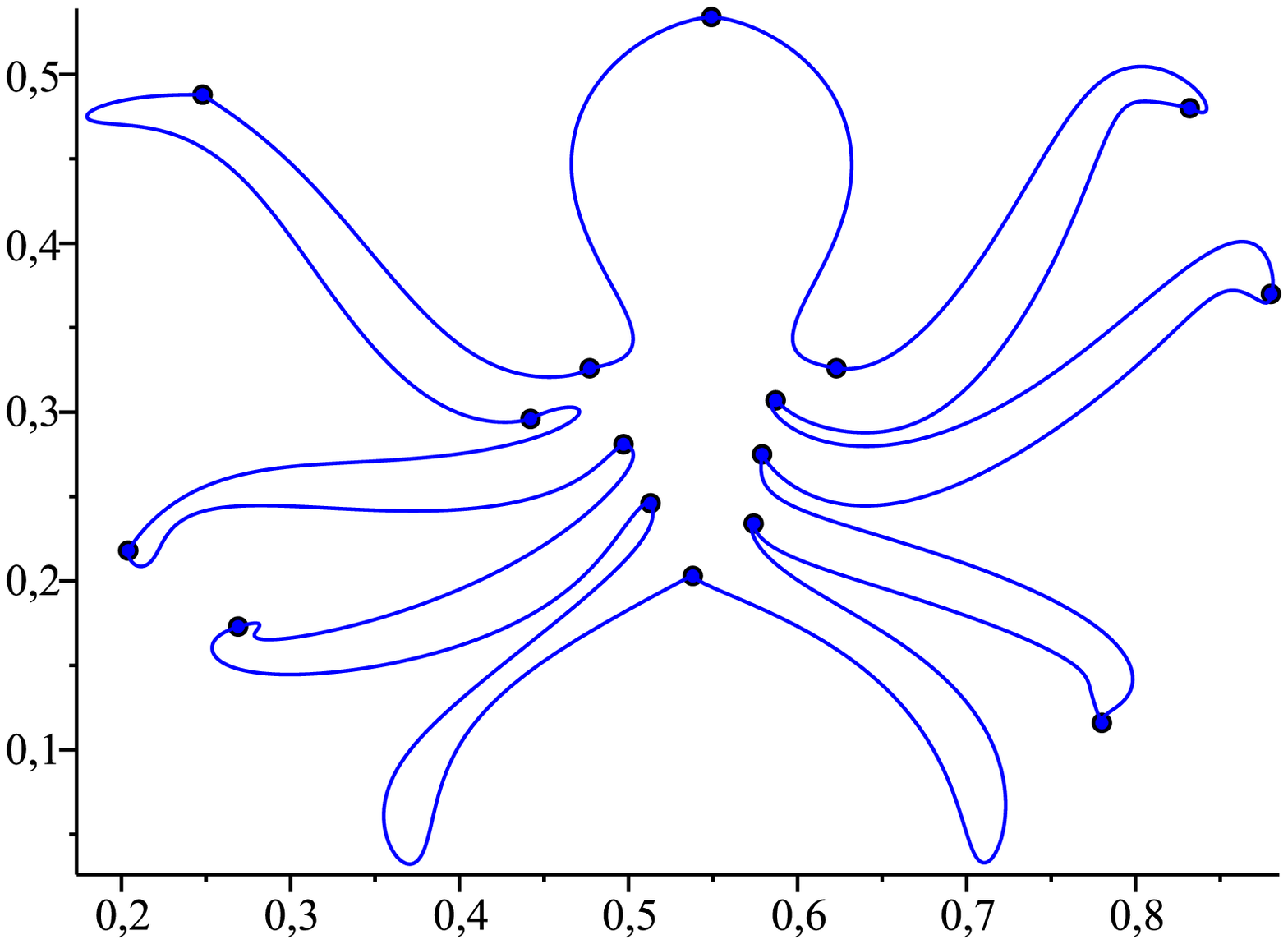}}
\subfloat[]{\label{figure1b}\includegraphics[width=0.45\textwidth]{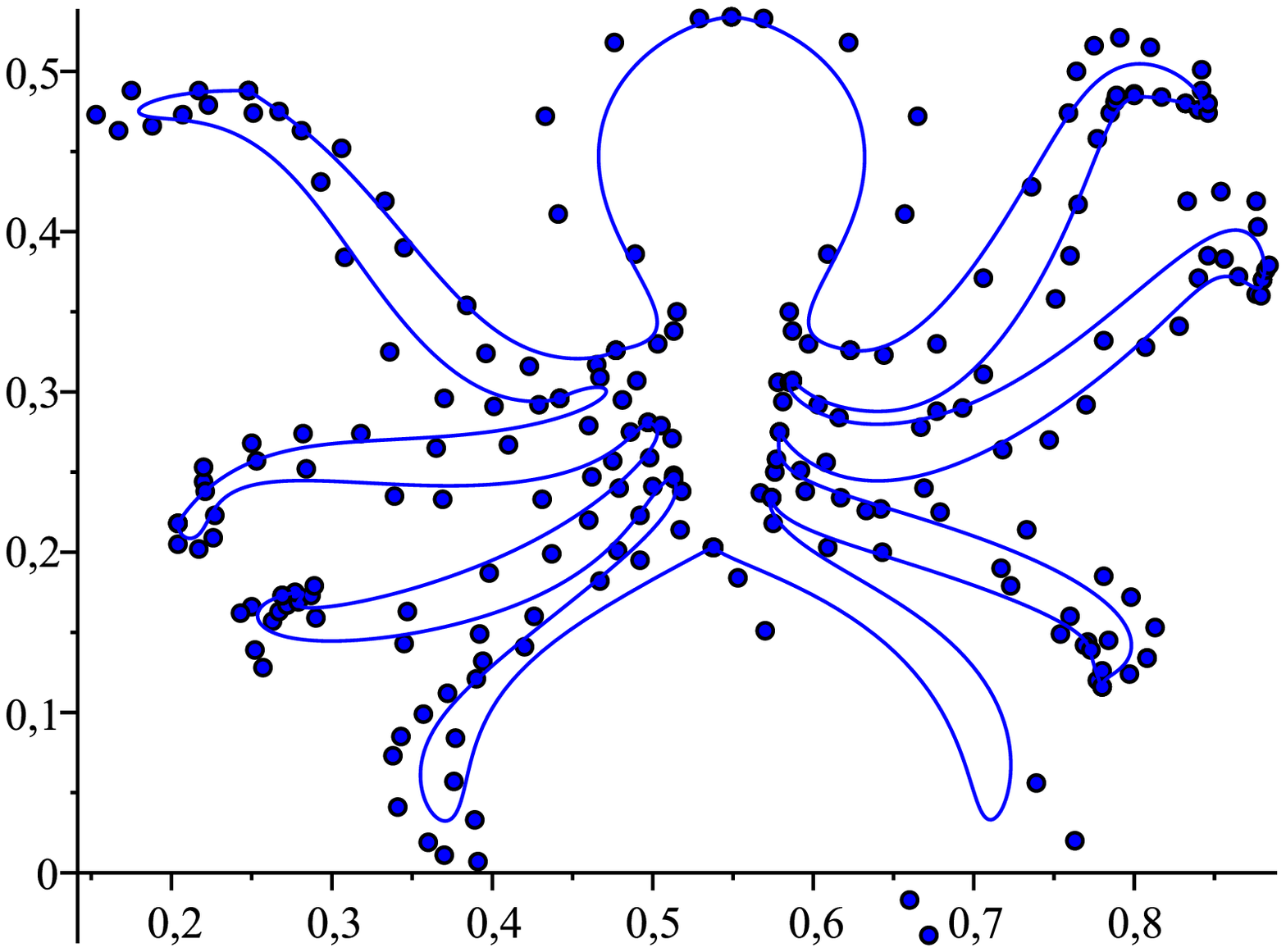}}\\
\subfloat[]{\label{figure1c}\includegraphics[width=0.45\textwidth]{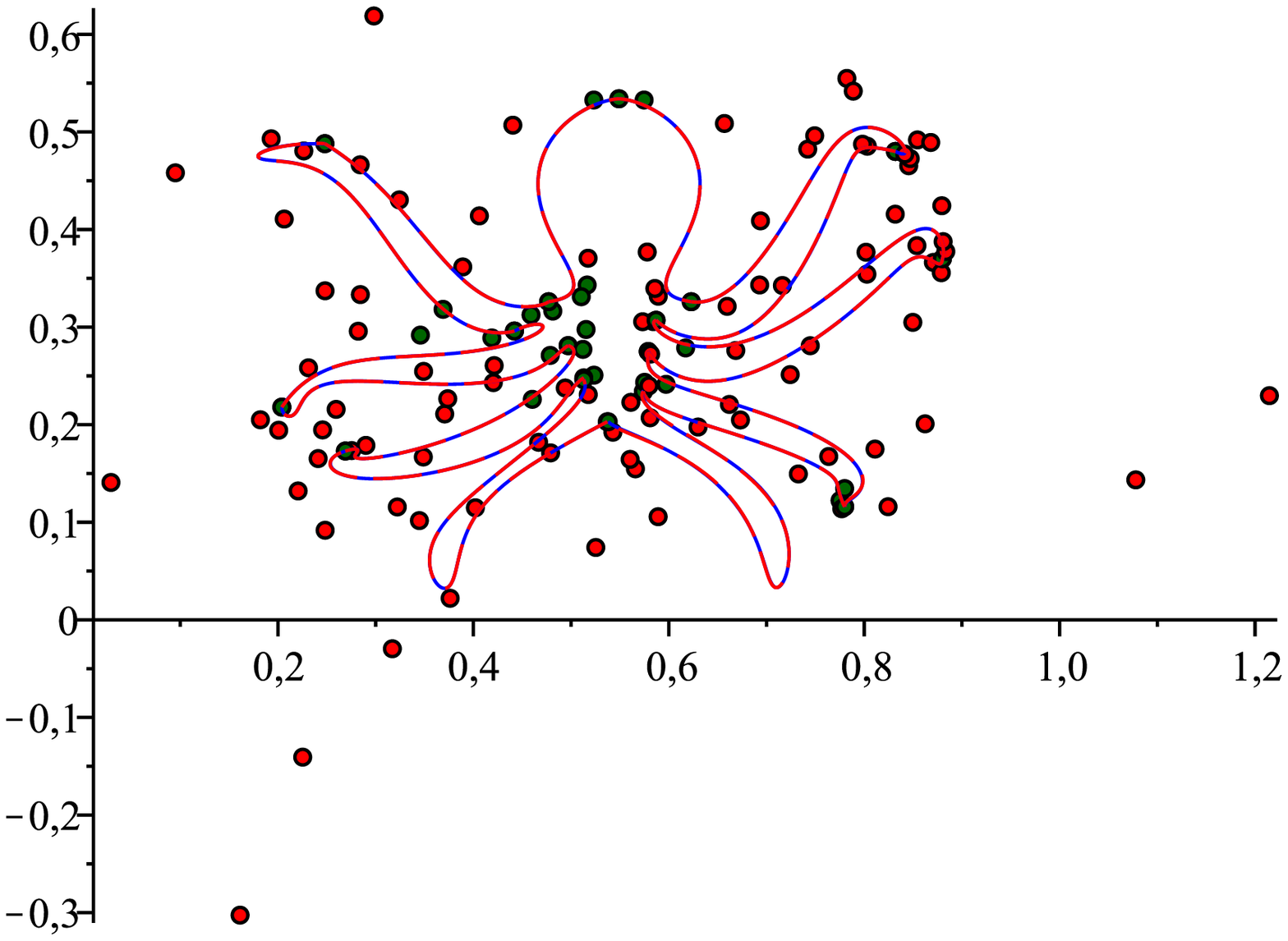}}
\subfloat[]{\label{figure1d}\includegraphics[width=0.45\textwidth]{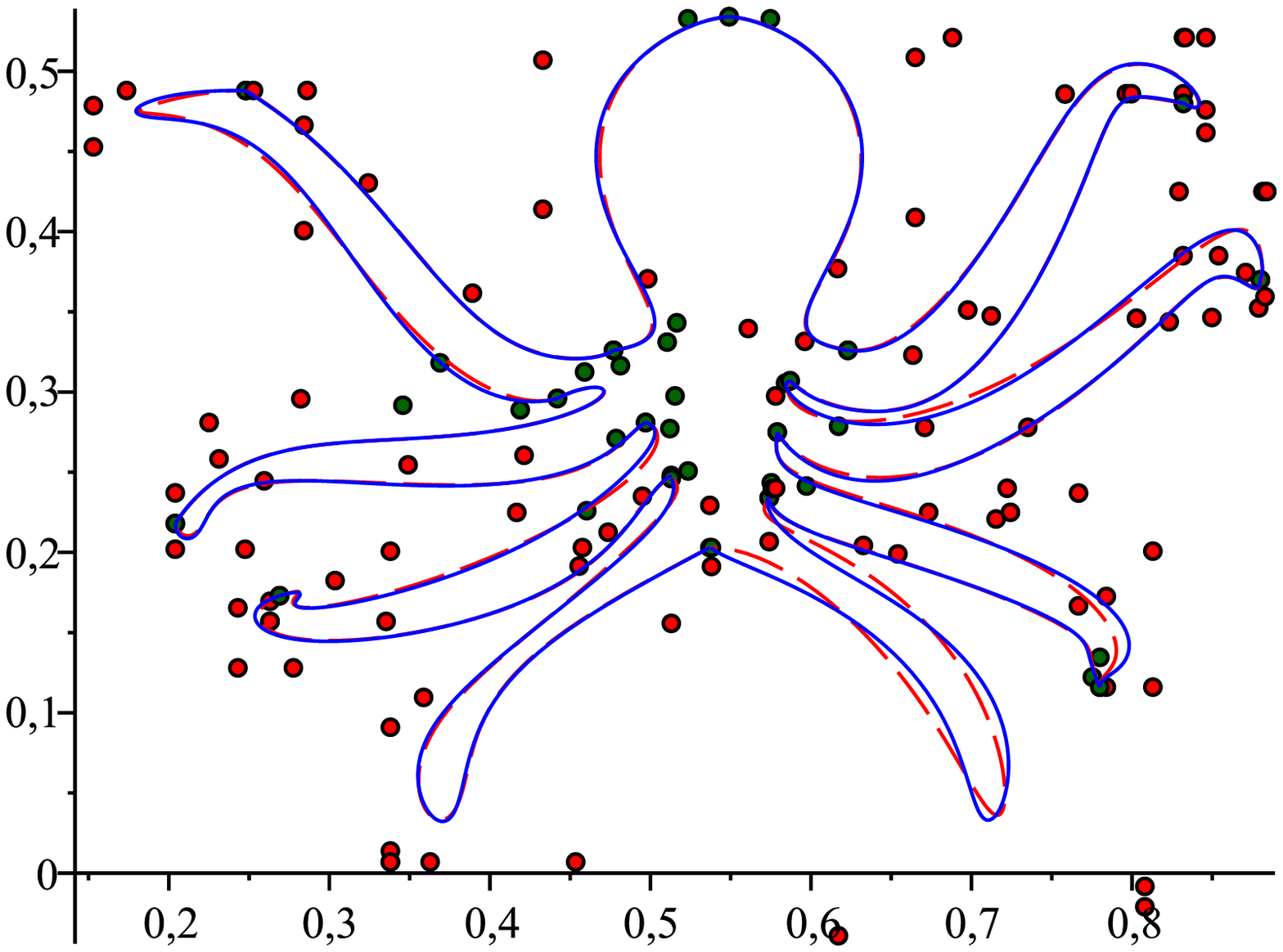}}
\end{tabular}
    \caption{Figure (a) shows the original composite B\'ezier curve ``Octopus''. Figure (b) presents the same composite curve as Figure (a) but with its control points. Figures (c) and (d) illustrate separate degree reduction of segments of the original composite curve (blue solid line) to degree reduced composite curve (red dashed line with red and green control points), where each segment is (c) optimal solution of the traditional degree reduction, (d) optimal solution of Problem~\ref{P:Problem}. The control points which are constrained by the continuity conditions~\eqref{EQ:cont} are green, while the other ones are red and restricted by~\eqref{EQ:box}. Parameters are specified in Table~\ref{tab:table1}.}
\end{center}
\end{figure}

\section{Conclusions}\label{Sec:Conc}
In this paper, we have improved the iterative algorithm of degree reduction of B\'ezier curves with box constraints from \cite{Gos15}.
In order to achieve our goal, we have combined some old and new results on dual bases with the observation that the subproblems in consecutive iterations of the original algorithm are related. The experiments have shown that the new approach is approximately two times faster than the previous one from \cite{Gos15}. Furthermore, in each iteration, we have avoided solving a system of normal equations which can be associated with matrix inversion.


\bibliographystyle{plain}


\end{document}